\theoremstyle{plain}
   \newtheorem{teo}{Theorem}
   \newtheorem{lema}[teo]{Lemma}
   \newtheorem{propo}[teo]{Proposition}
\theoremstyle{definition}
\theoremstyle{remark}
 \newtheorem{obs}{Remark}
\numberwithin{equation}{section}
\begin{document}

\title[Restricted weak type for one-sided operators]{Restricted weak type  inequalities for the one-sided Hardy-Littlewood maximal operator in higher dimensions}

\author[F. Berra]{Fabio Berra}
\address{CONICET and Departamento de Matem\'{a}tica (FIQ-UNL),  Santa Fe, Argentina.}
\email{fberra@santafe-conicet.gov.ar}


\thanks{The author was supported by CONICET and UNL}

\subjclass[2010]{42B25, 28B99}

\keywords{Restricted weak type, one-sided maximal operators}

\begin{abstract}
We give a quantitative characterization of the pairs of weights $(w,v)$ for which the dyadic version of the one-sided Hardy-Littlewood maximal operator satisfies a restricted weak $(p,p)$ type inequality, for $1\leq p<\infty$. More precisely, given any measurable set $E_0$ the estimate
\[w(\{x\in \mathbb{R}^n: M^{+,d}(\mathcal{X}_{E_0})(x)>t\})\leq \frac{C[(w,v)]_{A_p^{+,d}(\mathcal{R})}^p}{t^p}v(E_0)\]
holds if and only if the pair $(w,v)$ belongs to $A_p^{+,d}(\mathcal{R})$, that is
\[\frac{|E|}{|Q|}\leq [(w,v)]_{A_p^{+,d}(\mathcal{R})}\left(\frac{v(E)}{w(Q)}\right)^{1/p}\]
for every dyadic cube $Q$ and every measurable set $E\subset Q^+$. The proof follows some ideas appearing in \cite{O05}.

We also obtain a similar quantitative characterization for the non-dydadic case in $\mathbb{R}^2$ by following the main ideas in \cite{FMRO11}. 
\end{abstract}

\maketitle

\section{Introduction}
 In 1986 (\cite{Sawyer86}) E. Swayer started the theory of one-sided weights. Namely, he 
introduced the class of weights  $A_p^+$ and showed that this  class is necessary and sufficient for the weighted boundedness of the one-side Hardy-Littlewood maximal function. Some extensions and generalizations were given consequently in the articles \cite{MR93}, \cite{MRdlT93} and \cite{MROSdlT90}, among others. 

In \cite{O98} the author characterizes the functions $w$ for which the one-sided Hardy-Littlewood maximal operator 
\[M^+_v f(x)=\sup_{h>0}\frac{\int_x^{x+h}|f|v}{\int_x^{x+h}v}\]
verifies a restricted weak $(p,p)$ type on the real line, that is, a weak type inequality applied to the function $f=\mathcal{X}_E$, where $E$ is an arbitrary measurable set. More precisely, the inequality
\begin{equation*}
w\left(\left\{x\in \mathbb{R}: M^{+}_v(\mathcal{X}_E)(x)>t\right\}\right)\leq \frac{C}{t^p}v(E)
\end{equation*} 
holds if and only if $w\in A_p^+(\mathcal{R})(v\,dx)$. This set corresponds to the class of weights that satisfy a restricted $A_p^+$ condition with respect to the measure $d\mu=v(x)\,dx$ (see section below for details).

Although the theory in this setting was deeply developed and the main results were improved and generalized, most of the results were set on $\mathbb{R}$. 

In \cite{O05} Ombrosi characterized the pair of weights $(w,v)$ for those the inequality
\begin{equation}\label{eq: intro - tipo debil de M+ diadica}
w\left(\left\{x\in \mathbb{R}^n: M^{+,d}f(x)>t\right\}\right)\leq \frac{C}{t^p}\int_{\mathbb{R}^n}|f|^pv
\end{equation}
holds for every positive $t$, and where $1\leq p<\infty$. The operator $M^{+,d}$ is a dyadic version of $M^+$ defined on $\mathbb{R}^n$. A similar result was also obtained for $M^{-,d}$.

It is well-known that the operators $M^+f$ and $M^{+,d}f$ are pointwise equivalent on $\mathbb{R}$ (see \cite{MRdlT93}). However, this result is false in general in higher dimensions. This means that a non-dyadic version of \eqref{eq: intro - tipo debil de M+ diadica} cannot be obtained directly from the dyadic case, and the problem of find such an estimate remained open. 

In \cite{FMRO11} Forzani, Mart\'in-Reyes and Ombrosi proposed a way to generalize the operators $M^+$ and $M^-$ to higher dimensions and solved the problem discussed above on $\mathbb{R}^2$. The technique used, although newfangled and quite delicate, relied heavily upon the dimension. This means that the corresponding problem for $n\geq 3$ still remains open. 

 Related to strong estimates in dimension greater than one, some partial results were obtained in \cite{LO10}.  At this point we would also like to mention interesting applications of this theory to parabolic differential equations obtained by J. Kinnunen and O. Saari in \cite{KS16} and \cite{KS16-2}.

In this article we use some ideas of \cite{O05} and \cite{FMRO11} to give a characterization of the pairs of weights for which the one-sided Hardy-Littlewood maximal operator satisfies a restricted weak type inequality in higher dimensions.

Concretely, for the dyadic case we have the following result.

\begin{teo}\label{teo: teorema principal}
	Let $(w,v)$ be a pair of weights and $1\leq p<\infty$. Then the following statements are equivalent:
	\begin{enumerate}[\rm (a)]
		\item \label{item: teo: teorema principal - item a} The operator $M^{+,d}$ is of restricted weak $(p,p)$ type with respect to $(w,v)$, that is, there exists a positive constant $C$ such that the inequality
		\[w\left(\left\{x\in \mathbb{R}^n: M^{+,d}(\mathcal{X}_E)(x)>t\right\}\right)\leq \frac{C[(w,v)]_{A_p^{+,d}(\mathcal{R})}^p}{t^p}v(E)\]
		holds for every positive $t$ and every measurable set $E$;
		\item \label{item: teo: teorema principal - item b} $(w,v)$ belongs to $A_p^{+,d}(\mathcal{R})$.
	\end{enumerate}
\end{teo}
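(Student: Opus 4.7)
The plan is to prove the two implications separately. For the easy direction, (a) $\Rightarrow$ (b), I would use the pointwise lower bound $M^{+,d}(\mathcal{X}_E)(x) \ge |E|/|Q|$, valid for every $x\in Q$ and every measurable $E\subset Q^+$; this is immediate because the cube $Q$ itself is admissible in the dyadic supremum defining $M^{+,d}$, and the corresponding one-sided average of $\mathcal{X}_E$ equals $|E|/|Q|$. Consequently $Q\subset\{M^{+,d}(\mathcal{X}_E)>t\}$ for every $t<|E|/|Q|$, so the restricted weak-type hypothesis gives
\[
w(Q)\le \frac{C\,[(w,v)]_{A_p^{+,d}(\mathcal{R})}^p}{t^p}\,v(E).
\]
Letting $t\nearrow |E|/|Q|$ and rearranging produces exactly the condition $A_p^{+,d}(\mathcal{R})$.

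For the harder direction (b) $\Rightarrow$ (a), my plan is to adapt the Calder\'on--Zygmund-type decomposition from \cite{O05}. Fix $t>0$ and let $\{Q_j\}$ be the pairwise disjoint collection of \emph{maximal} dyadic cubes satisfying $|E\cap Q_j^+|/|Q_j|>t$. By construction $\{M^{+,d}(\mathcal{X}_E)>t\}\subset\bigcup_j Q_j$. Applying condition (b) to each pair $(Q_j,\,E\cap Q_j^+)$ (legitimate since $E\cap Q_j^+\subset Q_j^+$) and rearranging yields
\[
t^p\,w(Q_j)\le [(w,v)]_{A_p^{+,d}(\mathcal{R})}^p\,v(E\cap Q_j^+),
\]
and summing over $j$ gives
\[
t^p\,w\!\Big(\bigcup_j Q_j\Big)\le [(w,v)]_{A_p^{+,d}(\mathcal{R})}^p\,\sum_j v(E\cap Q_j^+).
\]

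The main obstacle, and the step where real work is required, is passing from $\sum_j v(E\cap Q_j^+)$ to a constant multiple of $v(E)$. Although the cubes $Q_j$ are pairwise disjoint, the translated cubes $Q_j^+$ genuinely overlap: a smaller maximal cube can sit inside the shift of a larger one, so one cannot simply replace the sum by $v(\bigcup_j Q_j^+)$. Following the spirit of \cite{O05}, my plan is to split the family $\{Q_j\}$ into a finite number of subfamilies, depending only on the dimension $n$, inside each of which the translates $\{Q_j^+\}$ are pairwise disjoint. This would give a bounded-overlap estimate $\sum_j v(E\cap Q_j^+)\le c_n\,v(E)$ and close the argument with a constant of the form $c_n\,[(w,v)]_{A_p^{+,d}(\mathcal{R})}^p$. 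Establishing this combinatorial splitting via a careful exploitation of the dyadic structure and the maximality of the $Q_j$ is precisely where the heavy lifting will lie.
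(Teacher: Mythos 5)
Your first direction, (a) $\Rightarrow$ (b), is fine and is exactly the paper's argument. The gap is in the key combinatorial step you postpone in (b) $\Rightarrow$ (a): the claimed splitting of the maximal cubes $\{Q_j\}$ into boundedly many subfamilies within each of which the translates $Q_j^+$ are pairwise disjoint is impossible. Already in $n=1$, take $E=[2-2^{-10},2)$ and $t=1/2$: every dyadic interval $Q_k=[2-2^{-k+1},2-2^{-k})$ with $k\geq 10$ satisfies $|E\cap Q_k^+|/|Q_k|=1>t$ and is maximal (all of its dyadic ancestors have their $+$-translate to the right of $2$, hence disjoint from $E$), yet the translates $Q_k^+=[2-2^{-k},2)$ form an infinite strictly nested chain, so no finite colouring can make them disjoint. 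Worse, the inequality you want to extract from the splitting, $\sum_j v(E\cap Q_j^+)\leq c_n\,v(E)$, is itself false as a consequence of disjointness and maximality alone: in the example above $\sum_k v([2-2^{-k},2))$ diverges for a suitable locally integrable $v$ (e.g.\ $v(x)=(2-x)^{-1}\bigl(\log\frac{1}{2-x}\bigr)^{-2}$ near $2$), while $v(E)<\infty$. So the heavy lifting you defer cannot be carried out along the route you describe.

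What the paper does instead (Lemma~\ref{lema: estimacion w(Qj)}) is to accept the unbounded overlap of the $Q_j^+$ and stratify the family by nesting depth: $i_m$ collects those $j$ for which exactly $m$ translates $Q_s^+$ strictly contain $Q_j^+$. A packing argument (the cubes $Q_s$ with $Q_s^+\subsetneq Q_{j_0}^+$ all sit in $(Q_{j_0})^{2,+}$, which has measure $2^n|Q_{j_0}|$) shows that within $2^{n+2}$ further levels at least half of the mass $|E\cap Q_{j_0}^+|$ survives outside the deeper sets $F_m=\bigcup_{j\in i_m}E\cap Q_j^+$. One then applies the $A_p^{+,d}(\mathcal{R})$ condition not to $E\cap Q_j^+$ but to the pruned subsets $E\cap Q_j^+\cap F_{m+2^{n+2}}^c$, which still carry density $\geq\mu/2$ and whose $v$-masses telescope along residue classes of $m$ modulo $2^{n+2}$, giving $\sum_j w(Q_j)\leq C\mu^{-p}[(w,v)]_{A_p^{+,d}(\mathcal{R})}^p v(E\cap\bigcup_jQ_j^+)$. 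Note also that this lemma requires the two-sided normalization $\mu<|E\cap Q_j^+|/|Q_j|\leq 2\mu$, which is why the theorem's proof first partitions the maximal cubes into the classes $C_k=\{j: 2^kt<|E\cap Q_j^+|/|Q_j|\leq 2^{k+1}t\}$ and sums a geometric series in $k$; your outline is missing this reduction as well.
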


For the non-dyadic case we prove the next theorem.

\begin{teo}\label{teo: tipo debil (p,p) restringido para M^+ en R2}
	Let $(w,v)$ be a pair of nonnegative measurable functions defined in $\mathbb{R}^2$ and $1\leq~p<~\infty$. The following conditions are equivalent:
	\begin{enumerate}[\rm (a)]
		\item \label{item: teo: tipo debil (p,p) restringido para M^+ en R2 - item a} The operator $M^{+}$ is of restricted weak $(p,p)$ type with respect to $(w,v)$, that is, there exists a positive constant $C$ such that the inequality
		\[w\left(\left\{x\in \mathbb{R}^2: M^{+}(\mathcal{X}_E)(x)>t\right\}\right)\leq \frac{C[(w,v)]_{A_p^{+,d}(\mathcal{R})}^p}{t^p}v(E)\]
		holds for every positive $t$ and every measurable set $E$;
		\item \label{item: teo: tipo debil (p,p) restringido para M^+ en R2 - item b} $(w,v)$ belongs to $A_p^{+}(\mathcal{R})$.
	\end{enumerate}
\end{teo}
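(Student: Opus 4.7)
The plan is to follow the two-dimensional strategy of \cite{FMRO11}, with the modifications needed to handle characteristic function inputs and to propagate a quantitative constant. For the easy direction (a) $\Rightarrow$ (b), testing the inequality on $\mathcal{X}_E$ for $E \subset Q^+$ does the job: the competitor that sends $Q$ to $Q^+$ shows $M^+(\mathcal{X}_E)(x) \geq |E|/|Q|$ for every $x \in Q$, so applying the restricted weak type inequality at any level $t < |E|/|Q|$ yields $w(Q) \leq C[(w,v)]^p (|Q|/|E|)^p v(E)$, which after rearrangement is precisely the $A_p^+(\mathcal{R})$ condition.

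For (b) $\Rightarrow$ (a), I would fix a measurable set $E$ of positive finite measure and a level $t > 0$, and write $\Omega_t = \{x \in \mathbb{R}^2 : M^+(\mathcal{X}_E)(x) > t\}$. For each $x \in \Omega_t$ choose a square $Q(x) \ni x$ with $|E \cap Q(x)^+| > t |Q(x)|$. The crux is to extract from $\{Q(x)\}_{x \in \Omega_t}$ a countable subfamily $\{Q_j\}$ that (i) essentially covers $\Omega_t$ and (ii) whose forward shifts $\{Q_j^+\}$ have bounded overlap. Granted such a selection, the $A_p^+(\mathcal{R})$ condition applied to each pair $(Q_j, E \cap Q_j^+)$ and the inequality $|Q_j|/|E \cap Q_j^+| < 1/t$ give
\begin{equation*}
w(\Omega_t) \leq \sum_j w(Q_j) \leq [(w,v)]_{A_p^+(\mathcal{R})}^p \sum_j \left(\frac{|Q_j|}{|E \cap Q_j^+|}\right)^p v(E \cap Q_j^+) \leq \frac{[(w,v)]_{A_p^+(\mathcal{R})}^p}{t^p} \sum_j v(E \cap Q_j^+),
\end{equation*}
and the bounded overlap of $\{Q_j^+\}$ turns the last sum into $C\, v(E)$, yielding the desired bound.

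The main obstacle is the construction of this covering. In the classical two-sided setting, three or four shifted dyadic grids dominate the uncentered Hardy--Littlewood maximal operator and reduce the problem uniformly to the dyadic case; in the one-sided setting the forward shift $Q \mapsto Q^+$ destroys the symmetry that makes shifted grids effective. Following \cite{FMRO11}, I would instead use a purely geometric selection adapted to the plane, simultaneously tracking the position of $x$ inside $Q$ and the relative position of $Q^+$ with respect to $Q$. This combinatorial obstruction is exactly what confines the technique to $\mathbb{R}^2$, and the role of Theorem \ref{teo: teorema principal} here is to validate that summing the pairwise estimates over the selected family produces the correct quantitative dependence on $[(w,v)]_{A_p^+(\mathcal{R})}$.
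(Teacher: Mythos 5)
Your direction (a) $\Rightarrow$ (b) is fine and matches the paper. The gap is in (b) $\Rightarrow$ (a): you defer the entire difficulty to a covering lemma that, as you state it, is not available. You ask for a subfamily $\{Q_j\}$ covering $\Omega_t$ whose shifted squares $\{Q_j^+\}$ have bounded overlap; this is precisely what fails for one-sided operators in dimension $\geq 2$, and it is not what the covering lemma of \cite{FMRO11} delivers. That lemma only gives almost-disjointness of the dilated shifts $(\tilde{Q}_i)^+$ \emph{among squares of the same size}; across different sizes one can only extract almost-disjoint subsets $F_j\subset(\tilde{Q}_j)^+$ still carrying a definite proportion of $E$, and even this requires an \emph{upper} bound $|E\cap(\tilde{Q}_j)^+|/|Q_j|\leq 8t$ in addition to the lower bound. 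Your selection only records the lower bound $|E\cap Q(x)^+|>t|Q(x)|$, so the final sum $\sum_j v(E\cap Q_j^+)$ cannot be collapsed to $Cv(E)$.

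The missing mechanism is the level-set decomposition: one must work on $\{t<M^+(\mathcal{X}_E)\leq 2t\}$ (summing over dyadic levels at the end is trivial since the estimate is applied to a single $t$ at a time on each annulus), because the upper bound $M^+(\mathcal{X}_E)(x)\leq 2t$ is what forces $|E\cap(\tilde{Q}_x)^{+2}|/|Q_x|\leq 8t$ via the containment $(\tilde{Q}_x)^{+2}\subset Q_{x,2\ell}$, and that two-sided density control is what makes the almost-disjoint family $\{F_j\}$ exist. The paper also needs several reductions you do not mention: passing to squares of dyadic sidelength, splitting $M^+$ into the three sub-operators $\mathcal{M}^{+i}$ associated with the subsquares $Q^1_{x,h},Q^2_{x,h},Q^3_{x,h}$, truncating the weights (Lemma~\ref{lema: pesos truncados en Ap^+(R) estan en la clase}) so that $w\in L^1_{\mathrm{loc}}$ and $w\geq\gamma>0$, and a compactness argument with $(1+\varepsilon)$-dilated squares to pass from a finite point set to $w(K)$. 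Finally, Theorem~\ref{teo: teorema principal} plays no role in the proof of the non-dyadic theorem; the quantitative constant comes directly from applying the $A_p^+(\mathcal{R})$ condition to the pairs $(\tilde{Q}_{x_i},\,E\cap F_{x_i})$ with $F_{x_i}\subset(\tilde{Q}_{x_i})^+$.
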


The article is organized as follows. In \S~\ref{section: preliminares} we give the preliminaries and definitions required for these main results. In \S~\ref{section: caso diadico} and \S~\ref{section: caso no diadico} we prove Theorem~\ref{teo: teorema principal} and Theorem~\ref{teo: tipo debil (p,p) restringido para M^+ en R2}, respectively.


\section{Preliminaries and basic definitions}\label{section: preliminares}
We shall deal with dyadic cubes with sides parallel to the coordinate axes. Given a dyadic cube $Q=\prod_{i=1}^n[a_i,b_i)$, we will denote with $Q^+=\prod_{i=1}^n[b_i,2b_i-a_i)$ and $Q^-=\prod_{i=1}^n[2a_i-b_i,a_i)$.

Given a positive number $s$, we denote $(Q)^{s,+}=\prod_{i=1}^n[a_i,a_i+sh)$, where $h=b_i-a_i$. Similarly, we denote $(Q)^{s,-}=\prod_{i=1}^n[b_i-sh,b_i)$.

For $x=(x_1,\dots,x_n)$ in $\mathbb{R}^n$ and $h>0$, we denote $Q_{x,h}=\prod_{i=1}^n[x_i,x_i+h)$ and $Q_{x,h^-}=\prod_{i=1}^n[x_i-h,x_i)$. The one-sided Hardy-Littlewood maximal operators are given by
\[M^{+}f(x)=\sup_{h>0}\frac{1}{|Q_{x,h}|}\int_{Q_{x,h}}|f(y)|\,dy, \quad\textrm{ and }\quad M^{-}f(x)=\sup_{h>0}\frac{1}{|Q_{x,h^-}|}\int_{Q_{x,h^-}}|f(y)|\,dy.\]
We shall consider the dyadic version of these operators, that is,
\[M^{+,d}f(x)=\sup_{Q\ni x}\frac{1}{|Q|}\int_{Q^+}|f(y)|\,dy,\quad\textrm{ and }\quad M^{-,d}f(x)=\sup_{Q\ni x}\frac{1}{|Q|}\int_{Q^-}|f(y)|\,dy.\]
where the supremum are taken over dyadic cubes. 

Given $1<p<\infty$, we say that a pair of weights $(w,v)$ belongs to $A_p^+$ if there exists a positive constant $C$ such that the inequallity
\[\left(\int_Q w\right)\left(\int_{Q^+} v^{1-p'}\right)^{p-1}\leq C|Q|^p\]
 holds for every cube $Q$ in $\mathbb{R}^n$. 
 
When $p=1$, we say that $(w,v)$ belongs to $A_1^+$ if there exist a positive constant $C$ that verifies
\[M^{-}w(x)\leq Cv(x),\]
for almost every $x$. The smallest constant for which these inequalities hold is denoted by $[(w,v)]_{A_p^+}$.
 
 Similarly, we say that $(w,v)$ belongs to $A_p^{+,d}$ if the inequalities above hold for every dyadic cube $Q$ and, when $p=1$, the involved operator is $M^{-,d}$. In this case, the corresponding smallest constant is denoted by $[(w,v)]_{A_p^{+,d}}$.

For $1\leq p<\infty$, we say that $(w,v)\in A_p^{+,d}(\mathcal{R})$ if there exists a positive constant $C$ such that the inequality
\begin{equation}\label{eq: condicion Ap^{+,d} restringido}
\frac{|E|}{|Q|}\leq C\left(\frac{v(E)}{w(Q)}\right)^{1/p}
\end{equation}
holds for every dyadic cube $Q$ and every measurable set $E\subset Q^+$. The smallest constant $C$ for which the inequality above holds will be denoted by $[(w,v)]_{A_p^{+,d}(\mathcal{R})}$.

We say that a pair of weights $(w,v)$ belongs to $A_p^+(\mathcal{R})$, $1\leq p<\infty$, if the inequality \eqref{eq: condicion Ap^{+,d} restringido} holds for every cube $Q$ and every measurable subset $E$ of $Q^+$.

\begin{obs}
By replacing $Q^+$ by $Q^-$ and $M^-$ by $M^+$ we can define the $A_p^-$ classes, for $1\leq p<\infty$. The  dyadic version of these classes, $A_p^{-,d}$, are defined by considering dyadic cubes on their definitions. The same occurs for $A_p^-(\mathcal{R})$ and $A_p^{-,d}(\mathcal{R})$. 

Throughout the paper we shall present the results for $M^+$, but the same arguments can be adapted to get the corresponding versions for $M^-$. 
\end{obs}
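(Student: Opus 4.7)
The remark has two components. The first is purely definitional: the classes $A_p^-$, $A_p^{-,d}$, $A_p^-(\mathcal{R})$, and $A_p^{-,d}(\mathcal{R})$ are obtained from the corresponding $A_p^+$ conditions by swapping $Q^+$ with $Q^-$ throughout (and $M^-$ with $M^+$ when $p=1$), with the optimal constants $[(w,v)]_{A_p^-}$, etc., defined as before by taking the infimum over admissible $C$. Nothing requires verification here beyond noting that the resulting inequalities are well-posed. The second component, namely the claim that the arguments developed for $M^+$ carry over to $M^-$, is substantive, and the natural way to justify it is via an explicit reflection symmetry.

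Let $T\colon\mathbb{R}^n\to\mathbb{R}^n$ denote the reflection $Tx=-x$, and for a function $f$ and a weight $w$ set $\tilde f(y)=f(-y)$ and $\tilde w(y)=w(-y)$. A direct computation, using that $T(Q_{x,h})$ and $Q_{-x,h^-}$ coincide up to a set of measure zero, yields
\[M^+ f(-x)=M^-\tilde f(x)\qquad\text{for almost every }x,\]
and likewise $T(Q^+)=(TQ)^-$ modulo boundaries. Consequently, an estimate of the form
\[w(\{x\in\mathbb{R}^n:M^+(\mathcal{X}_E)(x)>t\})\leq \frac{K}{t^p}v(E)\]
for every measurable $E$ is equivalent, via the substitution $E\mapsto TE$, $w\mapsto\tilde w$, $v\mapsto\tilde v$, to the analogous estimate for $M^-$ against $(\tilde w,\tilde v)$ with the same constant $K$. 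Moreover, the reflection interchanges the defining inequalities of $A_p^+(\mathcal{R})$ and $A_p^-(\mathcal{R})$ and preserves their characteristics: $[(\tilde w,\tilde v)]_{A_p^-(\mathcal{R})}=[(w,v)]_{A_p^+(\mathcal{R})}$. The analogous identities hold for every $A_p^{\pm}$ class considered in the paper.

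In the dyadic setting the standard grid $\mathcal{D}$ is not $T$-invariant, but $T\mathcal{D}$ is again a nested system of half-open cubes of the same combinatorial type, and the proof of Theorem~\ref{teo: teorema principal} relies only on the tree structure of the grid rather than on its particular orientation. One may therefore either apply the reflection principle using $T\mathcal{D}$ in place of $\mathcal{D}$, or simply replay the arguments of \S\ref{section: caso diadico} and \S\ref{section: caso no diadico} replacing every occurrence of $Q^+$ by $Q^-$; either route yields the conclusions of Theorems~\ref{teo: teorema principal} and \ref{teo: tipo debil (p,p) restringido para M^+ en R2} for $M^{-,d}$ and $M^-$ with identical constants. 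The only point deserving a moment's attention is the half-open boundary convention, which is resolved by noting that the discrepancies lie on measure-zero hyperplanes and therefore do not affect any of the weighted integrals involved.
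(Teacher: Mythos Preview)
Your argument is correct, but note that the paper offers no proof whatsoever for this remark: it is stated as a bare observation, with the understanding that the reader can carry out the symmetric modifications mechanically. Your reflection argument via $T(x)=-x$ is a clean way to make the symmetry precise and goes well beyond what the paper supplies; in particular, your explicit identification $M^+f(-x)=M^-\tilde f(x)$ and the observation that $[(\tilde w,\tilde v)]_{A_p^-(\mathcal{R})}=[(w,v)]_{A_p^+(\mathcal{R})}$ give a one-line transfer principle for the non-dyadic results, while your remark that the dyadic arguments depend only on the tree structure of the grid (or can simply be replayed with $Q^-$ in place of $Q^+$) correctly handles the fact that $T$ does not preserve the standard dyadic lattice. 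The paper's implicit approach is the latter one---just rerun the proofs with the obvious substitutions---so your proposal is compatible with, but more explicit than, the paper's treatment.
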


The novelty of considering restricted weak type inequalities relies on that although we take a particular function $f$, we consider a wider class of weights. This property is contained in the following proposition.

\begin{propo}\label{propo: propiedades de Ap^+ y Ap^+ restringido}
	 $A_p^{+}\subset A_p^{+}(\mathcal{R})$ for every $1<p<\infty$, and $A_1^+=A_1^+(\mathcal{R})$.
\end{propo}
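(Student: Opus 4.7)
The plan is to treat the two assertions separately. For the inclusion $A_p^{+}\subset A_p^{+}(\mathcal{R})$ when $1<p<\infty$, I would simply apply Hölder's inequality with exponents $p,p'$ to the trivial identity $|E|=\int_E v^{1/p}v^{-1/p}\,dx$, giving
$$|E|\leq v(E)^{1/p}\left(\int_{Q^{+}}v^{1-p'}\right)^{1/p'}.$$
Since $(w,v)\in A_p^{+}$ bounds $\int_{Q^{+}}v^{1-p'}$ by $\bigl([(w,v)]_{A_p^{+}}|Q|^p/w(Q)\bigr)^{1/(p-1)}$, the identity $p'(p-1)=p$ converts the previous display directly into the restricted $A_p^{+}(\mathcal{R})$ inequality, with constant $[(w,v)]_{A_p^{+}(\mathcal{R})}\leq [(w,v)]_{A_p^{+}}^{1/p}$.

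The case $p=1$ requires both inclusions. For $A_1^{+}\subset A_1^{+}(\mathcal{R})$, I would fix a cube $Q=\prod_i[a_i,b_i)$ of side length $\ell$ and, for each $x\in Q^{+}$, choose the single scale $h=\max_i(x_i-a_i)\in[\ell,2\ell)$. This choice guarantees $Q\subseteq Q_{x,h^-}$ and $|Q_{x,h^-}|\leq 2^n|Q|$, which produces the pointwise bound $w(Q)/|Q|\leq 2^n M^{-}w(x)\leq 2^n [(w,v)]_{A_1^{+}}v(x)$ a.e. on $Q^+$. Integrating this inequality over an arbitrary $E\subset Q^{+}$ yields the restricted condition with constant at most $2^n[(w,v)]_{A_1^{+}}$.

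For the reverse inclusion $A_1^{+}(\mathcal{R})\subset A_1^{+}$, I would fix a Lebesgue point $x$ of $v$ and any $h>0$, and test the restricted inequality on the small cubes $E_\epsilon=\prod_i[x_i,x_i+\epsilon)$, which lie inside $(Q_{x,h^-})^{+}$ whenever $\epsilon\leq h$. The $A_1^{+}(\mathcal{R})$ condition gives
$$\frac{w(Q_{x,h^-})}{|Q_{x,h^-}|}\leq [(w,v)]_{A_1^{+}(\mathcal{R})}\,\frac{v(E_\epsilon)}{|E_\epsilon|},$$
and the Lebesgue differentiation theorem forces the right-hand side to converge to $[(w,v)]_{A_1^{+}(\mathcal{R})}\,v(x)$ as $\epsilon\to 0^{+}$. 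Taking the supremum over $h>0$ gives $M^{-}w(x)\leq [(w,v)]_{A_1^{+}(\mathcal{R})}v(x)$ for a.e.\ $x$.

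The main obstacle is really a bookkeeping one in the $p=1$ case: verifying that the definitions of $Q^{\pm}$ and the one-sided operators $M^{\pm}$ are geometrically compatible, i.e.\ that for every $x\in Q^{+}$ there is a single scale $h$ with $Q\subset Q_{x,h^{-}}$ and $|Q_{x,h^{-}}|\simeq |Q|$, and symmetrically that arbitrarily small ``forward'' cubes at a given point always lie inside $(Q_{x,h^{-}})^{+}$. Once these two elementary geometric facts are in hand, each direction reduces to a one-line computation.
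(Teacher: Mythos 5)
Your argument for $1<p<\infty$ (H\"older against $v^{1/p}v^{-1/p}$ on $E\subset Q^+$) is exactly the paper's proof, and your forward inclusion for $p=1$ is the paper's argument with the geometric bookkeeping made explicit: the paper simply asserts $\frac{1}{|Q|}\int_{(Q^+)^-}w\leq [(w,v)]_{A_1^+}v(x)$ for $x\in Q^+$, silently absorbing the factor $2^n$ that your choice $h=\max_i(x_i-a_i)\in[\ell,2\ell)$ makes visible, so your version is if anything the more careful one. The reverse inclusion $A_1^+(\mathcal{R})\subset A_1^+$ is where you genuinely diverge. You test the restricted condition on shrinking corner cubes $E_\epsilon=\prod_i[x_i,x_i+\epsilon)$ and invoke the Lebesgue differentiation theorem, whereas the paper tests it on the sublevel sets $E=\{y\in Q_{x,h}\colon v(y)<\lambda\}$ with $\lambda>\inf_{Q_{x,h}}v$, which gives $v(E)\leq\lambda|E|$ directly and then lets $\lambda\downarrow\inf_{Q_{x,h}}v$. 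Both routes are valid, but the paper's buys two things: it needs no differentiation theory at all, and it works for an arbitrary nonnegative measurable $v$ (note that Theorem 2 of the paper is stated for pairs of nonnegative measurable functions, not a priori locally integrable ones). Your Lebesgue-point argument implicitly requires $v\in L^1_{\mathrm{loc}}$ near $x$ so that $v(E_\epsilon)/|E_\epsilon|\to v(x)$; if $v$ fails to be integrable on every neighbourhood of $x$ while $v(x)<\infty$, the right-hand side is identically $+\infty$ and your limit gives no information, whereas the sublevel-set test still does. If you intend ``weight'' to mean locally integrable, your proof is complete as written; otherwise you should swap in the sublevel-set test (or restrict to the full-measure set where the corner averages of $v$ converge).
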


\begin{proof}
	Let $1<p<\infty$ and assume that $(w,v)\in A_p^+$. Fix a cube $Q$ and a measurable subset $E$ of $Q^+$ with $|E|>0$. Then 
	\begin{align*}
	|E|&\leq \left(\int_E v\right)^{1/p}\left(\int_{Q^+}v^{1-p'}\right)^{1/p'}\\
	&\leq [(w,v)]_{A_p^+}^{1/p}\left(\frac{v(E)}{w(Q)}\right)^{1/p}|Q|,
	\end{align*}
	which implies that $(w,v)\in A_p^{+}(\mathcal{R})$ and $[(w,v)]_{A_p^{+}(\mathcal{R})}\leq [(w,v)]_{A_p^+}^{1/p}$. 
	
	On the other hand, set $p=1$ and assume that $(w,v)\in A_1^+$. Fix a cube $Q$ and a measurable set $E\subset Q^+$ with positive measure. Then, for every $x\in E$, we have that
	\[\frac{1}{|Q|}\int_{(Q^{+})^-}w\leq [(w,v)]_{A_1^+}v(x),\]
	which implies that
	\[\frac{w(Q)}{|Q|}\leq [(w,v)]_{A_1^+}\frac{v(E)}{|E|},\]
	and then $(w,v)\in A_1^+(\mathcal{R})$. Conversely, fix $x$ and $h>0$. Let $Q=Q_{x,h^{-}}$, $\lambda>\inf_{Q_{x,h}} v$ and $E=\{y\in Q_{x,h}: v(y)<\lambda\}$. Then we have that
	\[\frac{w(Q_{x,h^-})}{|Q_{x,h^-}|}\leq [(w,v)]_{A_1^+(\mathcal{R})}\lambda.\]
	By letting $\lambda\to \inf_{Q_{x,h}}v$ and then taking supremum over $h$ we get that
	\[M^-w(x)\leq [(w,v)]_{A_1^+(\mathcal{R})}v(x).\qedhere\]
\end{proof}

The following lemma states a useful property for weights on the $A_p^+(\mathcal{R})$ class.

\begin{lema}\label{lema: pesos truncados en Ap^+(R) estan en la clase}
	Let $1\leq p<\infty$, $(w,v)$ be a par of weights in $A_p^+(\mathcal{R})$ and $a,b$ two positive constants. Then
	\begin{enumerate}[\rm (a)]
		\item $(w_0,v_0)=(\max\{w,a\},\max\{v,b\})$ belongs to $A_p^+(\mathcal{R})$, provided $a\leq b$;
		\item $(w_1,v_1)=(\min\{w,a\},\max\{v,b\})$ belongs to $A_p^+(\mathcal{R})$. 
	\end{enumerate}
\end{lema}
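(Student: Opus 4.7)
The plan is to verify the defining inequality of $A_p^+(\mathcal{R})$ for each modified pair by direct comparison with the inequality for $(w,v)$. Set $K=[(w,v)]_{A_p^+(\mathcal{R})}$ and fix a cube $Q$ and a measurable set $E\subset Q^+$ throughout.

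Part (b) is essentially immediate: since $w_1\leq w$ and $v_1\geq v$ pointwise, we have $w_1(Q)\leq w(Q)$ and $v_1(E)\geq v(E)$, so
\[\frac{|E|}{|Q|}\leq K\left(\frac{v(E)}{w(Q)}\right)^{1/p}\leq K\left(\frac{v_1(E)}{w_1(Q)}\right)^{1/p},\]
and the same constant $K$ works for $(w_1,v_1)$.

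For part (a) the substitution $w\mapsto w_0$ moves things in the wrong direction (it enlarges the mass of $Q$), so more care is needed. The idea is to use the pointwise bound $\max\{w,a\}\leq w+a$ to split $w_0(Q)\leq w(Q)+a|Q|$, insert this into $(|E|/|Q|)^p\,w_0(Q)$, and control each piece separately. The first piece is bounded by $K^p v(E)\leq K^p v_0(E)$ via the $A_p^+(\mathcal{R})$ condition for $(w,v)$; the second piece becomes $a|E|^p/|Q|^{p-1}$, which is at most $a|E|$ because $E\subset Q^+$ forces $|E|\leq|Q^+|=|Q|$ and $p\geq 1$.

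The hypothesis $a\leq b$ is exactly what enables the final absorption: it gives $a|E|\leq b|E|\leq v_0(E)$, since $v_0\geq b$ pointwise. Combining yields $(|E|/|Q|)^p\,w_0(Q)\leq(K^p+1)\,v_0(E)$, so $(K^p+1)^{1/p}$ is an admissible $A_p^+(\mathcal{R})$ constant for $(w_0,v_0)$. The only genuine subtlety — and the main point to identify — is the role of $a\leq b$: it is what makes the flat lower bound $b$ of $v_0$ dominate the artificially introduced mass $a|E|$ coming from $w_0$. Without this hypothesis there is no natural way to absorb the extra term, consistent with the fact that (a) would otherwise fail in general.
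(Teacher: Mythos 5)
Your proof is correct and follows essentially the same route as the paper: part (b) is the same one-line monotonicity argument, and in part (a) your pointwise bound $\max\{w,a\}\leq w+a$ is just a slightly cruder version of the paper's exact splitting of $w_0(Q)$ over $\{w\geq a\}$ and $\{w<a\}$, after which both arguments control the $w$-part by the $A_p^+(\mathcal{R})$ condition and absorb the $a|Q|$-part using $a\leq b$, $v_0\geq b$, and $|E|\leq |Q|$. Your identification of $a\leq b$ as the crux of the absorption matches the paper exactly.
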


\begin{proof}
	Let us first prove $(a)$. Fix a cube $Q$ and a measurable subset $E$ of $Q^+$. We have to show that there exists a positive constant $C$, independent of $Q$ and $E$, such that
	\[\frac{w_0(Q)}{v_0(E)}\leq C\left(\frac{|Q|}{|E|}\right)^p.\]
	We write
	\[w_0(Q)=\int_{Q\cap\{w\geq a\}}w_0+\int_{Q\cap\{w< a\}}w_0=w(Q\cap\{w\geq a\})+a|Q\cap\{w<a\}|,\]
	and therefore
	\[\frac{w_0(Q)}{v_0(E)}=\frac{w(Q\cap\{w\geq a\})}{v_0(E)}+\frac{a|Q\cap\{w<a\}|}{v_0(E)}=I+II.\]
	Now observe that
	\[I\leq \frac{w(Q)}{v(E)}\leq [(w,v)]_{A_p^+(\mathcal{R})}\left(\frac{|Q|}{|E|}\right)^p.\]
	On the other hand,
	\[II\leq \frac{a|Q|}{b|E|}\leq \left(\frac{|Q|}{|E|}\right)^p,\]
	since $a\leq b$ and $|Q|\geq |E|$. Therefore, $(w_0,v_0)\in A_p^+(\mathcal{R})$ and $[(w_0,v_0)]_{A_p^+(\mathcal{R})}\leq \max\left\{1,[(w,v)]_{A_p^+(\mathcal{R})}\right\}.$
	
	For the proof of $(b)$, observe that $w_1\leq w$ and $v_1\geq v$, so
	\[\frac{w_1(Q)}{v_1(E)}\leq \frac{w(Q)}{v(E)}\leq [(w,v)]_{A_p^+(\mathcal{R})}\left(\frac{|Q|}{|E|}\right)^{p},\]
	which shows that $(w_1,v_1)\in A_p^+(\mathcal{R})$ with $[(w_1,v_1)]_{A_p^+(\mathcal{R})}\leq [(w,v)]_{A_p^+(\mathcal{R})}$. 
\end{proof}

\section{Restricted weak $(p,p)$ type of $M^{+,d}$ in $\mathbb{R}^n$}\label{section: caso diadico}

We devote this section to prove~\ref{teo: teorema principal}. We start with a previous lemma which will be useful for this purpose. This result is an adaptation of Lemma 2.1 in \cite{O05}.

\begin{lema}\label{lema: estimacion w(Qj)}
	Let $1\leq p<\infty$, $(w,v)\in A_p^{+,d}(\mathcal{R})$ and $\mu>0$. Let $E$ be a measurable set such that $0<|E|<\infty$ and $\{Q_j\}_{j\in \Gamma_\mu}$ a disjoint family of dyadic cubes such that, for every $j\in \Gamma_\mu$, we have
	\begin{equation}\label{eq: lema: estimacion w(Qj) - eq1}
	\mu<\frac{|E\cap Q_j^+|}{|Q_j|}\leq 2\mu.
	\end{equation} 
	Then we have that
	\[\sum_{j\in \Gamma_\mu} w(Q_j)\leq \frac{C[(w,v)]_{A_p^{+,d}(\mathcal{R})}^p}{\mu^p}v\left(E\cap\left(\bigcup_{j\in \Gamma_\mu}Q_j^+\right)\right).\]
\end{lema}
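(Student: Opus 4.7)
The plan is to derive a pointwise estimate for each cube in $\Gamma_\mu$ via the restricted $A_p^{+,d}(\mathcal{R})$ hypothesis, and then to handle the overlaps of the sets $\{Q_j^+\}$. Since $E\cap Q_j^+\subset Q_j^+$, applying the restricted condition to the pair $(Q_j,\,E\cap Q_j^+)$ gives
\[
\frac{|E\cap Q_j^+|}{|Q_j|}\leq [(w,v)]_{A_p^{+,d}(\mathcal{R})}\left(\frac{v(E\cap Q_j^+)}{w(Q_j)}\right)^{1/p};
\]
combining with the lower density bound $\mu<|E\cap Q_j^+|/|Q_j|$ yields, after raising to the $p$-th power, the per-cube estimate
\[
\mu^p w(Q_j)\leq [(w,v)]_{A_p^{+,d}(\mathcal{R})}^p\,v(E\cap Q_j^+).
\]
Summing in $j$ reduces the lemma to an overlap estimate of the form $\sum_{j\in\Gamma_\mu}v(E\cap Q_j^+)\leq C\,v(E\cap\bigcup_j Q_j^+)$ with $C$ depending only on the dimension.

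The key structural observation is that each $Q_j^+$ is itself a dyadic cube, so any two of them are either disjoint or nested. I would select the sub-family $\mathcal{M}\subset\Gamma_\mu$ consisting of those $j$ for which $Q_j^+$ is maximal in $\{Q_k^+\}_{k\in\Gamma_\mu}$; the cubes $\{Q_j^+\}_{j\in\mathcal{M}}$ are then pairwise disjoint and $\bigcup_{j\in\Gamma_\mu}Q_j^+=\bigcup_{j\in\mathcal{M}}Q_j^+$, so the per-cube estimate summed over $\mathcal{M}$ already produces the desired bound for $\sum_{j\in\mathcal{M}}w(Q_j)$. For the remaining indices $j\notin\mathcal{M}$ (those whose $Q_j^+$ sits strictly inside a unique $Q_k^+$ with $k\in\mathcal{M}$), I would iterate the maximal-selection argument inside each parent $Q_k^+$, producing successive generations $\mathcal{M}_1,\mathcal{M}_2,\dots$ of $j$'s whose $Q_j^+$'s are pairwise disjoint within each fixed $Q_k^+$. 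The upper density bound $|E\cap Q_j^+|\leq 2\mu|Q_j|$ enters crucially here: the disjoint cubes of a given generation inside a parent have total Lebesgue measure at most that of the parent, and the upper bound $2\mu$ turns this into a geometric decay in Lebesgue measure along the nesting depth; the $A_p^{+,d}(\mathcal{R})$ inequality then transfers this decay to the corresponding $v$-measures, so the resulting geometric series sums to a dimensional constant.

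The main obstacle is exactly this overlap/iteration control, for two related reasons. First, chains of nested $Q_j^+$'s in the family can be arbitrarily long (one constructs in $1$D explicit examples in which a single point belongs to arbitrarily many $Q_j^+$'s from the family), so any argument that does not exploit the upper density bound $2\mu$ is doomed. Second, in dimensions $n\geq 2$ the rigid inclusion ``$Q_j\subset Q_k^+$ whenever $Q_j^+\subsetneq Q_k^+$ with $Q_j,Q_k$ disjoint'' that is automatic in $1$D breaks down: for example with $Q_k=[0,2)\times[0,2)$ and $Q_j=[1,2)\times[2,3)$ one has $Q_j^+=[2,3)\times[3,4)\subsetneq[2,4)\times[2,4)=Q_k^+$, yet $Q_j$ is disjoint from $Q_k\cup Q_k^+$. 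Consequently the tree/decomposition must keep careful track of how each source cube $Q_j$ is assigned to the ``parent'' $Q_k^+$ in the dyadic lattice of $Q^+$'s, rather than via direct geometric inclusion of $Q_j$ in $Q_k$ or $Q_k^+$. This is the technical core of the lemma and the point at which the adaptation from Ombrosi's Lemma~2.1 in \cite{O05} enters substantively.
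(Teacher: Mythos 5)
Your per-cube step is fine, and you correctly locate the difficulty in the overlaps of the $Q_j^+$'s; but the mechanism you propose for resolving it does not work, in two places. First, the reduction to $\sum_{j\in\Gamma_\mu}v(E\cap Q_j^+)\leq C\,v\bigl(E\cap\bigcup_j Q_j^+\bigr)$ with a dimensional $C$ is a reduction to a false statement: since the nesting chains of the $Q_j^+$'s can be arbitrarily long (as you yourself note) and $v$ is an arbitrary weight, $v$ may concentrate essentially all of its mass at a point lying in every $Q_j^+$ of a long chain, making the left side exceed the right by the (unbounded) chain length. Already in $n=1$ with $Q_j=[-2^{j+1},-2^j)$, so that $Q_j^+=[-2^j,0)$, and a suitable $E$ satisfying \eqref{eq: lema: estimacion w(Qj) - eq1}, a $v$ concentrated near the origin gives $\sum_j v(E\cap Q_j^+)=(N+1)v(E\cap\bigcup_j Q_j^+)$. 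Second, in your generational iteration you assert that the geometric decay of the Lebesgue measure of $E\cap Q_j^+$ along the nesting depth is ``transferred to the corresponding $v$-measures'' by the $A_p^{+,d}(\mathcal{R})$ condition. But that condition only bounds $v(E\cap Q_j^+)$ from \emph{below} (in terms of $w(Q_j)$); it gives no upper bound, so Lebesgue decay of the generations says nothing about decay of their $v$-masses, and the geometric series you invoke need not converge. In short, no bounded-overlap or decay estimate purely in terms of $v$ is available here.

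The paper's proof circumvents this by never applying the restricted condition to $E\cap Q_j^+$ itself. Stratifying by nesting depth ($i_m$ = indices whose $Q_j^+$ is strictly contained in exactly $m$ other $Q_s^+$'s of the family, $F_m=\bigcup_{j\in i_m}E\cap Q_j^+$), one uses the upper density bound $2\mu$ together with the containment $Q_j\subset (Q_{j_0})^{2,+}$ whenever $Q_j^+\subsetneq Q_{j_0}^+$ (this is exactly the correct substitute for the inclusion you observe fails for $n\geq 2$; it costs a factor $2^n$) to show $\sum_{m>m_0}|F_m\cap Q_{j_0}^+|\leq 2^{n+1}|E\cap Q_{j_0}^+|$. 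Hence $|F_{m_0+2^{n+2}}\cap Q_{j_0}^+|<\tfrac12|E\cap Q_{j_0}^+|$, so the set $(E\cap Q_{j_0}^+)\setminus F_{m_0+2^{n+2}}$ still has density $>\mu/2$ in $Q_{j_0}$. Applying the $A_p^{+,d}(\mathcal{R})$ condition to \emph{these} trimmed sets, which live in $\sigma_{m}\setminus\sigma_{m+2^{n+2}}$ and are therefore pairwise disjoint within each residue class of $m$ modulo $2^{n+2}$, the $v$-masses telescope to at most $2^{n+2}\,v(E\cap\sigma_0)$. It is this disjointification of the sets fed into the restricted condition --- not a decay of $v$-masses --- that closes the argument, and it is the ingredient missing from your proposal.
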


\begin{proof}
	For $m\geq 0$, we define the sets
	\[i_m=\{j\in \Gamma_\mu: \textrm{ there exist exactly }m\textrm{ cubes }Q_s^+: Q_j^+\subsetneq Q_s^+ \textrm{ with }s\in\Gamma_\mu\}\]
	and also
	\[\sigma_m =\bigcup_{j\in i_m} Q_j^+.\]
	Also, we define $E_j^+=E\cap Q_j^+$ and $F_m=\bigcup_{j\in i_m} E_j^+$.
	
	Notice that $\Gamma_\mu=\bigcup_{m\geq 0} i_m$ and, if $j_1$ and $j_2$ belong to $i_m$ for some $m$, then $Q_{j_1}^+\cap Q_{j_2}^+=\emptyset$. This yields
	\[|F_m|=\sum_{j\in i_m} |E_j^+|.\]
	On the other hand, $\sigma_{m+1}\subset \sigma_m$ for every $m\geq 0$, so 
	\begin{equation}\label{eq: lema: estimacion w(Qj) - relacion conjuntos Fm}
	F_{m+1}\subset F_m\quad\textrm{ and }\quad|F_{m+1}|\leq |F_m|.
	\end{equation}
	For fixed $m_0$ and $j_0\in i_{m_0}$, if $Q_j^+\subsetneq Q_{j_0}^+$ then $j\in i_m$ with $m>m_0$ and $Q_j\subset Q_{j_0}^{2,+}$. Therefore,
	\[\bigcup_{m>m_0}\bigcup_{j\in i_m: Q_j^+\subsetneq Q_{j_0}^+} Q_j\subset (Q_{j_0})^{2,+}\] 
	and this implies that
	\[\sum_{m>m_0}\sum_{j\in i_m: Q_j^{+}\subsetneq Q_{j_0}^+}|Q_j|\leq \left|(Q_{j_0})^{2,+}\right|=2^n|Q_{j_0}|,\]
	since the cubes $Q_j$ are disjoint. Thus, by \eqref{eq: lema: estimacion w(Qj) - eq1} we get
	\begin{align*}
	\sum_{m>m_0}|F_m\cap Q_{j_0}^+|&=\sum_{m>m_0}\sum_{j\in i_m: Q_j^{+}\subsetneq Q_{j_0}^+}|E_j^+|\\
	&\leq 2\mu \sum_{m>m_0}\sum_{j\in i_m: Q_j^{+}\subsetneq Q_{j_0}^+}|Q_j|\\
	&\leq 2^{n+1}\mu|Q_{j_0}|\\
	&\leq 2^{n+1}|E_{j_0}^+|.
	\end{align*}
	This last estimate implies that
	\[\sum_{m=m_0+1}^{m_0+2^{n+2}}|F_m\cap Q_{j_0}^+|<2^{n+1}|E_{j_0}^+|\]
	and then there must be an index $m$, $m_0+1\leq m\leq m_0+2^{n+2}$ such that
	\[|F_m\cap Q_{j_0}^+|<\frac{|E_{j_0}^+|}{2}.\]
	By \eqref{eq: lema: estimacion w(Qj) - relacion conjuntos Fm} we get
	\[|F_{m_0+2^{n+2}}\cap Q_{j_0}^+|\leq |F_m\cap Q_{j_0}^+|<\frac{|E_{j_0}^+|}{2},\]
	and consequently
	\[\frac{|Q_{j_0}^+\cap F_{m_0+2^{n+2}}^c|}{|Q_{j_0}|}>\frac{1}{2}\frac{|E_{j_0}^+|}{|Q_{j_0}|}>\frac{\mu}{2}.\]
	Now, we can estimate
	\begin{align*}
	\sum_{j\in \Gamma_\mu} w(Q_j)&=\sum_{m=0}^\infty \sum_{j\in i_m}w(Q_j)\\
	&<\left(\frac{2}{\mu}\right)^p\sum_{m=0}^\infty\sum_{j\in i_m} w(Q_j)\left(\frac{|Q_j^+\cap F_{m+2^{n+2}}^c|}{|Q_j|}\right)^p\\
	&\leq \left(\frac{2}{\mu}\right)^p \sum_{m=0}^\infty\sum_{j\in i_m} [(w,v)]_{A_p^{+,d}(\mathcal{R})}^pw(Q_j)\frac{v(Q_j^+\cap F_{m+2^{n+2}}^c)}{w(Q_j)}\\
	&\leq \left(\frac{2}{\mu}\right)^p [(w,v)]_{A_p^{+,d}(\mathcal{R})}^p\sum_{m=0}^\infty\int_{\sigma_m-\sigma_{m+2^{n+2}}}\mathcal{X}_E v\\
	&=\left(\frac{2}{\mu}\right)^p [(w,v)]_{A_p^{+,d}(\mathcal{R})}^p\sum_{k=0}^{2^{n+2}-1}\sum_{m=0}^\infty \int_{\sigma_{2^{n+2}m+k}-\sigma_{2^{n+2}(m+1)+k}}\mathcal{X}_E v\\
	&\leq\left(\frac{2}{\mu}\right)^p [(w,v)]_{A_p^{+,d}(\mathcal{R})}^p\sum_{k=0}^{2^{n+2}-1}\int_{\sigma_k}\mathcal{X}_E v\\
	&\leq\frac{2^{n+p+2}[(w,v)]_{A_p^{+,d}(\mathcal{R})}^p}{\mu^p}v(E\cap \sigma_0).\qedhere
	\end{align*}
\end{proof}

\medskip

\begin{proof}[Proof of Theorem~\ref{teo: teorema principal}]
	We shall first prove that \eqref{item: teo: teorema principal - item a} implies \eqref{item: teo: teorema principal - item b}. Fix a dyadic cube $Q$ and a measurable subset $E$ of $Q^+$. Assume that $|E|>0$, since otherwise the condition follows inmediately. For every $x$ in $Q$ we have that
	\[M^{+,d}\mathcal{X}_E(x)\geq \frac{1}{|Q|}\int_{Q^+}\mathcal{X}_E=\frac{|E|}{|Q|},\]
	which implies that $Q\subset \{x: M^{+,d}\mathcal{X}_E(x)>|E|/(2|Q|)\}$. By using \eqref{item: teo: teorema principal - item a} we get
	\[w(Q)\leq C\left(\frac{|Q|}{|E|}\right)^pv(E),\]
	which shows that $(w,v)\in A_p^{+,d}(\mathcal{R})$.
	
	Now we prove that \eqref{item: teo: teorema principal - item b} implies \eqref{item: teo: teorema principal - item a}. Fix a measurable set $E$ and assume, without loss of generality, that $0<|E|<\infty$. Fix $t>0$ and let $\{Q_j\}_j$ the family of dyadic cubes that verify 
	\[w(\{x: M^{+,d}\mathcal{X}_E(x)>t\})=\bigcup_j Q_j,\]
	where $|E\cap Q_j^+|/|Q_j|>t$ for every $j$. We shall consider a partition of this family of cubes. Given $k\geq 0$, we set
	\[C_k=\left\{j: 2^k t<\frac{|E\cap Q_j^+|}{|Q_j|}\leq 2^{k+1}t\right\}\]
	and apply Lemma~\ref{lema: estimacion w(Qj)} to the family $C_k$ with $\mu=2^kt$, for every $k$. Therefore,
	\[\sum_{j\in C_k}w(Q_j)\leq \frac{C[(w,v)]_{A_p^{+,d}(\mathcal{R})}^p}{(2^k t)^p}v\left(\bigcup_{j\in C_k}E_j^+\right).\] 
	This yields
	\begin{align*}
	w(\{x: M^{+,d}\mathcal{X}_E(x)>t\})&=\sum_jw(Q_j)\\
	&=\sum_{k=0}^\infty\sum_{j\in C_k}w(Q_j)\\
	&\leq \sum_{k=0}^\infty \frac{C[(w,v)]_{A_p^{+,d}(\mathcal{R})}^p}{(2^kt)^p}v(E)\\
	&=\frac{C[(w,v)]_{A_p^{+,d}(\mathcal{R})}^p}{t^p}v(E),
	\end{align*}
	which completes the proof.\qedhere
\end{proof}

\medskip

\section{Restricted weak $(p,p)$ type of $M^+$ in $\mathbb{R}^2$}\label{section: caso no diadico}

We devote this section to the proof of Theorem~\ref{teo: tipo debil (p,p) restringido para M^+ en R2}. Along this section we shall assume that the space where we work is $\mathbb{R}^2$. We begin by introducing some specifics in this setting.

We say that a square $Q$ has dyadic size if $\ell(Q)=2^k$, for some integer $k$. $\ell(Q)$ denotes the length of the sides of $Q$. Given a square $Q$, $\alpha Q$ will denote the square with the same center as $Q$ and sides of length $\alpha \ell(Q)$.

For $h>0$ and $Q=[a,a+h]\times [b,b+h]$, we set $\tilde{Q}$ the dilation of $Q$ to the right and to the bottom in $\ell(Q)/2$. That is, $\tilde{Q}=[a,a+3/2h]\times[b-h/2,b+h]$.

\medskip

\begin{figure}[h!]\label{fig: Q y Q tilde}
\begin{center}
\begin{tikzpicture}
\draw [line width=1.25] (0,0)--(3,0)--(3,3)--(0,3)--cycle;
\draw [line width=1.25] (2,3)--(2,1)--(0,1);
\node at (1,2) {$Q$};
\node at (2,0.5) {$\tilde{Q}$};
\node [left] at (0,2) {$h$};
\node [left] at (0,0.5) {$\frac{h}{2}$};
\node [above] at (1,3) {$h$};
\node [above] at (2.5,3) {$\frac{h}{2}$};
\end{tikzpicture}
\end{center}
\caption{The cubes $Q$ and $\tilde{Q}$.}
\end{figure}
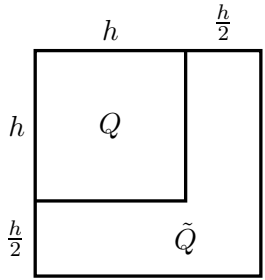

\medskip

Given $x=(x_1,x_2)\in \mathbb{R}^2$ and $h>0$ recall that $Q_{x,h}=[x_1,x_1+h]\times [x_2,x_2+h]$. We shall consider the following partition of a cube $Q_{x,h}$:
\[Q_{x,h}=Q_{x,h/2}\cup Q_{x,h}^1\cup Q_{x,h}^2\cup Q_{x,h}^3,\]
where
\begin{align*}
Q_{x,h}^1&=[x_1+h/2,x_1+h]\times[x_2+h/2,x_2+h],\\
Q_{x,h}^2&=[x_1+h/2,x_1+h]\times[x_2,x_2+h/2],\\
Q_{x,h}^3&=[x_1,x_1+h/2]\times[x_2+h/2,x_2+h].
\end{align*}

\begin{figure}[h!]
	\begin{center}
		\begin{tikzpicture}
		\draw [line width=1.25] (0,0)--(4,0)--(4,4)--(0,4)--cycle;
		\draw (2,0)--(2,4);
		\draw (0,2)--(4,2);
		\node at (1,3) {$Q_{x,h}^3$};
		\node at (3,3) {$Q_{x,h}^1$};
		\node at (3,1) {$Q_{x,h}^2$};
		\node at (1,1) {$Q_{x,\tfrac{h}{2}}$};
		\end{tikzpicture}
	\end{center}
	\caption{Subsquares of $Q_{x,h}$.}
	\label{fig: subcuadrados}
\end{figure}
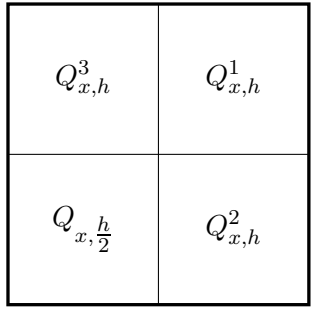

The proof of Theorem~\ref{teo: tipo debil (p,p) restringido para M^+ en R2} relies on the following covering lemma, that is a consequence of Lemma~3.1 stated and proved in \cite{FMRO11}, when we take $f=\mathcal{X}_E$. This result contains a covering argument that is related to the subcube $Q_{x,h}^2$. For the main proof we will require the corresponding versions for $Q_{x,h}^1$ and $Q_{x,h}^3$, that can be achieved by following similar ideas.

\begin{lema}\label{lema: cubrimiento para i=2}
	Let $t>0$ and $E$ a measurable set such that $0<|E|<\infty$. Let $A=\{x_j\}_{j=1}^N$ be a finite  set of points in $\mathbb{R}^2$. Suppose that, for every $1\leq j\leq N$, we have a square of dyadic size $Q_j$, with $x_j$ as its upper right corner and that satisfies
	\[\frac{t}{4}<\frac{\left|E\cap Q_j^{+2}\right|}{|Q_j|}.\]
	Then there exists a set $\Gamma\subset\{1,\dots,N\}$ such that
	\begin{equation}
	A\subset \bigcup_{i\in \Gamma} \tilde{Q}_i
	\end{equation}
	and
	\begin{equation}
	\frac{t}{4}<\frac{\left|E\cap \tilde{Q}_j^+\right|}{|Q_j|}.
	\end{equation}
	Moreover, for every $i,j\in \Gamma$ with $i\neq j$ we have $\tilde{Q}_i\not\subseteq \tilde{Q}_j$ and the squares $\tilde{Q}_i$, $i\in \Gamma$, of the same size are almost disjoint, that is, there exists a positive constant $C$ such that for every $l$
	\[\sum_{i\in \Gamma, \ell(Q_i)=l}\mathcal{X}_{\tilde{Q}_i}(x)\leq C.\]
	This implies that the squares $(\tilde{Q}_i)^+$ are almost disjoint too. Further, if
\[\frac{|E\cap (\tilde{Q}_j)^+|}{|Q_j|}\leq 8t\]
then there exists a family of sets $\{F_j\}_{j\in \Gamma}$ with $F_j\subset (\tilde {Q}_j)^+$ such that
\[\frac{t}{8}<\frac{|E\cap F_j|}{|Q_j|}\]
and they are almost disjoint, that is, there exists a positive constant $C$ (independent of everything) such that
\[\sum_{j\in \Gamma} \mathcal{X}_{F_j}(x)\leq C. \]
\end{lema}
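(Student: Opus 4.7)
The plan is to obtain this lemma directly as a corollary of Lemma~3.1 in \cite{FMRO11}, specialized to $f=\mathcal{X}_E$. The covering lemma in that paper is phrased for a general nonnegative $f$, with hypothesis $t/4<(1/|Q_j|)\int_{Q_j^{+2}}f$, which under the substitution $f=\mathcal{X}_E$ is precisely the hypothesis at hand. Its conclusion yields the subset $\Gamma\subset\{1,\dots,N\}$ together with the covering property $A\subset\bigcup_{i\in\Gamma}\tilde{Q}_i$, the averaging lower bound $t/4<|E\cap\tilde{Q}_j^+|/|Q_j|$, the non-nesting property $\tilde{Q}_i\not\subseteq\tilde{Q}_j$ for $i\neq j$ in $\Gamma$, and the bounded overlap of $\{\tilde{Q}_i\}_{i\in\Gamma}$ at each fixed side length. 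Almost disjointness of the translates $(\tilde{Q}_i)^+$ at each fixed scale is then automatic, since the operation $Q\mapsto Q^+$ is a translation by a vector depending only on $\ell(Q)$, and translation by a fixed vector preserves the overlap constant of a family.

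The genuinely new ingredient, relative to what is available in \cite{FMRO11}, is the construction of the almost-disjoint family $\{F_j\}_{j\in\Gamma}$ under the extra upper bound $|E\cap(\tilde{Q}_j)^+|/|Q_j|\leq 8t$. My plan is to enumerate $\Gamma$ in order of non-decreasing $\ell(Q_j)$, breaking ties arbitrarily, and define greedily
\[F_j=\bigl(E\cap(\tilde{Q}_j)^+\bigr)\setminus\bigcup_{i\text{ precedes }j}F_i,\]
so that the $F_j$ are pairwise disjoint and a fortiori almost disjoint. The target lower bound $|E\cap F_j|>(t/8)|Q_j|$ should then be extracted from the two-sided hypothesis $t/4<|E\cap(\tilde{Q}_j)^+|/|Q_j|\leq 8t$ combined with the bounded overlap of the $(\tilde{Q}_i)^+$, which permits an accounting of the $E$-mass removed from $(\tilde{Q}_j)^+$ by previously processed cubes.

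The hard part will be this last estimate: pointwise bounded overlap at each scale does not, by itself, yield a clean measure removal bound. I expect to argue scale-by-scale. At the same scale as $Q_j$, the bounded overlap $\sum_{\ell(Q_i)=\ell(Q_j)}\mathcal{X}_{(\tilde{Q}_i)^+}\leq C$ limits the loss by a fixed multiple of $|E\cap(\tilde{Q}_j)^+|$, while for strictly smaller scales $\ell(Q_i)<\ell(Q_j)$ the total removed mass should be dominated by a geometric series in the side-length ratio, thanks to the upper bound $\leq 8t$ and the quadratic scaling of $|(\tilde{Q}_i)^+|$. Balancing both losses against the available margin $(t/4-t/8)|Q_j|=(t/8)|Q_j|$ is the decisive computation, and is presumably where the numerical constants $4$ and $8$ in the statement get calibrated.
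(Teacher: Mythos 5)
Your handling of the first block of conclusions (the selection of $\Gamma$, the covering, the lower bound over $\tilde{Q}_j^+$, the non-nesting, and the bounded overlap at each fixed scale, plus the observation that $Q\mapsto Q^+$ is a scale-dependent translation) coincides with the paper's route. But note that the paper offers no proof at all of this lemma: it asserts that the \emph{entire} statement, including the existence of the almost disjoint family $\{F_j\}$ under the two-sided bound, is Lemma~3.1 of \cite{FMRO11} specialized to $f=\mathcal{X}_E$. Your premise that the $F_j$ construction is ``genuinely new relative to what is available in \cite{FMRO11}'' is therefore mistaken; that construction is part of the cited lemma, and the delicate work lives there.

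More importantly, the construction you propose in its place has a genuine quantitative gap. If you order $\Gamma$ by non-decreasing side length and set $F_j=\bigl(E\cap(\tilde{Q}_j)^+\bigr)\setminus\bigcup_{i\prec j}F_i$, the mass you must control is $\bigl|E\cap(\tilde{Q}_j)^+\cap\bigcup_{i\prec j}(\tilde{Q}_i)^+\bigr|$, and it must come out strictly below the margin $(t/4-t/8)|Q_j|=(t/8)|Q_j|$. Your own accounting cannot deliver this. At the same scale, bounded overlap only gives a loss of the form $(C-1)\,|E\cap(\tilde{Q}_j)^+|$, which is at least of order $t|Q_j|$ (and can be as large as $8(C-1)t|Q_j|$); this already swamps the margin $(t/8)|Q_j|$ unless $C<9/8$, which the overlap lemma does not provide. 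Across scales the situation is worse: the non-nesting condition $\tilde{Q}_i\not\subseteq\tilde{Q}_j$ does not prevent $(\tilde{Q}_i)^+\subseteq(\tilde{Q}_j)^+$ for $\ell(Q_i)<\ell(Q_j)$, so a large collection of small squares processed before $j$ can tile $(\tilde{Q}_j)^+$ and absorb essentially all of $E\cap(\tilde{Q}_j)^+$ into their own $F_i$'s, leaving $F_j$ with arbitrarily little mass; and the per-scale losses you hope to sum as a geometric series are each of size comparable to $t|Q_j|$ (number of scale-$2^{-m}\ell(Q_j)$ squares meeting $(\tilde{Q}_j)^+$ is of order $4^m$, each contributing up to $8t\cdot4^{-m}|Q_j|$), so the series does not decay. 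The fact that the lemma only claims $\sum_j\mathcal{X}_{F_j}\leq C$ rather than pairwise disjointness is itself a signal that the correct construction is not a greedy disjointification; you should either import the $F_j$ part of Lemma~3.1 of \cite{FMRO11} as the paper does, or reproduce its actual argument rather than substituting this one.
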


\begin{proof}[Proof of Theorem~\ref{teo: tipo debil (p,p) restringido para M^+ en R2}]
	The fact that \eqref{item: teo: tipo debil (p,p) restringido para M^+ en R2 - item a} implies \eqref{item: teo: tipo debil (p,p) restringido para M^+ en R2 - item b} can be achieved in a similar way to Theorem~\ref{teo: teorema principal}. Let us prove then that \eqref{item: teo: tipo debil (p,p) restringido para M^+ en R2 - item b} implies \eqref{item: teo: tipo debil (p,p) restringido para M^+ en R2 - item a}.
	The operator $M^+$ is pointwise equivalent to the operator
	\[\mathcal{M}^+f(x)=\sup_{k\in\mathbb{Z}}\frac{1}{|Q_{x,2^k}|}\int_{Q_{x,2^k}}|f|,\]
	that is, the one-sided maximal operator defined over squares of dyadic size.
	We shall consider, for $i=1,2$ and 3 the operators
	\[\mathcal{M}^{+i}f(x)=\sup_{k\in\mathbb{Z}}\frac{1}{|Q_{x,2^k}^i|}\int_{Q_{x,2^k}^i}|f|,\]
	where the cubes $Q_{x,2^k}^i$ are depicted in Figure~\ref{fig: subcuadrados}.
	
	Let us fix a measurable set $E$ with $0<|E|<\infty$. Let $(w,v)$ be a pair of weights in $A_p^+(\mathcal{R})$. We shall prove that
	\[w\left(\left\{x\in \mathbb{R}^2: \mathcal{M}^+(\mathcal{X}_E)(x)>t\right\}\right)\leq \frac{C}{t^p}v(E),\]
	for every $t>0$. It will be enough to show that
	\[w(\{x\in \mathbb{R}^2: t<\mathcal{M}^+(\mathcal{X}_E)(x)\leq 2t\})\leq \frac{C}{t^p}v(E),\]
	and this also reduces to prove that
	\begin{equation}\label{eq: teo: tipo debil (p,p) restringido para M^+ en R2 - eq1}
	w(\{x\in \mathbb{R}^2: t<\mathcal{M}^{+i}(\mathcal{X}_E)(x), \quad \mathcal{M}^+(\mathcal{X}_E)(x)\leq 2t\})\leq \frac{C}{t^p}v(E),
	\end{equation}
	for $i=1,2,3$. We show the proof for $i=2$, being similar for the other indices.
	
	Given a positive number $\xi$ we consider the truncated maximal operator defined by
	\[M_\xi^{+2}(\mathcal{X}_E)(x)=\sup_{h=2^k>\xi, k\in\mathbb{Z}}\frac{4|E\cap Q_{x,h}^2|}{h^2}.\]
	Observe that $M_\xi^{+2}(\mathcal{X}_E)\uparrow \mathcal{M}^{+2}(\mathcal{X}_E)$ when $\xi\to 0^+$. Therefore, it will be enough to prove that
	\begin{equation}\label{eq: teo: tipo debil (p,p) restringido para M^+ en R2 - eq2}
	w\left(\left\{x\in \mathbb{R}^2: t<\mathcal{M}_\xi^{+2}(\mathcal{X}_E)(x), \quad \mathcal{M}^+(\mathcal{X}_E)(x)\leq 2t\right\}\right)\leq \frac{C}{t^p}v(E),
	\end{equation}
	for every $t>0$ and with $C$ independent of $\xi, E$ and $t$.
	
	By virtue of Lemma~\ref{lema: pesos truncados en Ap^+(R) estan en la clase} we can assume that $w\in L^1_{\textit{loc}}$ and also that there exists a positive constant $\gamma$ such that $0<\gamma\leq w(x)$, for every $x\in \mathbb{R}^2$.
	
	Let $\Omega_t=\left\{x\in \mathbb{R}^2: t<\mathcal{M}_\xi^{+2}(\mathcal{X}_E)(x), \quad \mathcal{M}^+(\mathcal{X}_E)(x)\leq 2t\right\}$. The measure $d\mu(x)=w(x)\,dx$ is finite over compact sets since we are assuming $w\in L^1_{\textit{loc}}$. Therefore, inequality \eqref{eq: teo: tipo debil (p,p) restringido para M^+ en R2 - eq2} follows if we prove that
	\[w(K)\leq \frac{C}{t^p}v(E),\]
	for every compact set $K\subset \Omega_t$ and with $C$ independent of $K$.
	
	Fix a compact set $K\subset \Omega_t$. For every $x=(x_1,x_2)\in K$ there exists a square $Q_x=[x_1-~\ell,x_1]\times[x_2-\ell,x_2]$ with $\xi\leq \ell$, $\ell=2^k$ for some $k\in \mathbb{Z}$ and
	\[\frac{t}{4}<\frac{|E\cap Q_x^{+2}|}{|Q_x|}.\]
	Let $Q_{x,2\ell}=[x_1,x_1+2\ell]\times[x_2,x_2+2\ell]$. We have that $(\tilde{Q}_x)^{+2}\subset Q_{x,2\ell}$ (see Figure~\ref{fig: contencion de cuadrados}) and thus
	
	\begin{align*}
	\frac{|E\cap (\tilde{Q}_x)^{+2}|}{|Q_x|}&\leq \frac{|E\cap Q_{x,2\ell}|}{|Q_x|}\\
	&=\frac{4|E\cap Q_{x,2\ell}|}{|Q_{x,2\ell}|}\\
	&\leq 4\mathcal{M}^+(\mathcal{X}_E)(x)\leq 8t.
	\end{align*}
	
	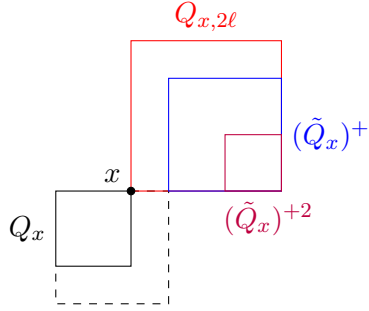
\begin{figure}[h!]
		\begin{center}
			\begin{tikzpicture}
			\draw (0,0)--(0,-1)--(-1,-1)--(-1,0)--cycle;
			\draw[dashed] (0,0)--(0.5,0)--(0.5,-1.5)--(-1,-1.5)--(-1,-1);
			\draw[color=red] (0,0)--(2,0)--(2,2)--(0,2)--cycle;
			\draw[color=blue] (0.5,0)--(0.5,1.5)--(2,1.5)--(2,0)--cycle;
			\node[left] at (0,0.2) {$x$};
			\draw [fill=black] (0,0)circle(0.05cm);
			\node[left] at (-1,-0.5) {$Q_x$};
			\node[right, color=blue] at (2,0.75) {$(\tilde{Q}_x)^+$};
			\node[above, color=red] at (1,2) {$Q_{x,2\ell}$};
			\draw[color=purple] (1.25,0)--(1.25,0.75)--(2,0.75)--(2,0)--cycle;
			\node[below,color=purple] at (1.825,0) {$(\tilde{Q}_x)^{+2}$};
			\end{tikzpicture}
		\end{center}
	\caption{$(\tilde{Q}_{x})^{+2}\subset Q_{x,2\ell}$.}
	\label{fig: contencion de cuadrados}
	\end{figure} 

Therefore, we have that for every $x\in K$ there exists a square $Q_x=[x_1-\ell,x_1]\times[x_2-\ell,x_2]$ such that $\xi\leq \ell$,
\[\frac{t}{4}<\frac{\left|E\cap Q_x^{+2}\right|}{|Q_x|}\]
and
\[\frac{\left|E\cap (\tilde{Q}_x)^{+2}\right|}{|Q_x|}\leq 8t.\]

We have also that there exists a positive constant $M$, depending on $t$ and $E$, such that $\ell\leq M$ since
\[|Q_x|\leq \frac{4\left|E\cap Q_x^{+2}\right|}{t}\leq\frac{4|E|}{t}<\infty.\]
This implies that there exists a square $R$ such that $\bigcup_{x\in K} \tilde{Q}_x\subset R$. We shall consider the square $2R$. Since $w$ is integrable in $2R$, there exists $0<\varepsilon<1$ such that if $Q\subset R$ is a square, then
\[w((1+\varepsilon)Q\backslash Q)\leq \gamma \xi^2.\]
If $Q\subset R$ verifies $\ell(Q)\geq \xi$, then
\[w((1+\varepsilon)Q\backslash Q)\leq \gamma\xi^2\leq \gamma|Q|\leq w(Q).\]
This yields
\[w((1+\varepsilon)Q)\leq 2w(Q),\]
for every $Q\subset R$ with $\ell(Q)\geq \xi$. Particularly,
\[w((1+\varepsilon)\tilde{Q}_x)\leq 2w(\tilde{Q}_x), \quad \textrm{ for every }x\in K.\]
Let $B_x(r)$ be the ball of radius $r$ centered at $x$. We have that $K\subset \bigcup_{x\in K}B_x\left(\frac{\xi\varepsilon}{2}\right)$, and then there exist $x_1,x_2,\dots,x_s\in K$ such that $K\subset\bigcup_{j=1}^s B_{x_j}\left(\frac{\xi\varepsilon}{2}\right)$, since $K$ is compact.

We apply now Lemma~\ref{lema: cubrimiento para i=2} to the set $A=\{x_j\}_{j=1}^s$ and the squares $\{Q_j\}_{j=1}^s$ associated to the points $x_j$. Then, there exists a set $\Gamma\subset\{1,\dots,s\}$ that verifies $A\subset \bigcup_{i\in \Gamma} \tilde{Q}_{x_i}$ and there also exist $\{F_{x_i}: i\in \Gamma\}$, $F_{x_i}\subset (\tilde{Q}_{x_i})^+$,
\begin{equation}\label{eq: teo: tipo debil (p,p) restringido para M^+ en R2 - eq3}
\frac{t}{8}<\frac{|E\cap F_{x_i}|}{|Q_{x_i}|}
\end{equation}
and
\[\sum_{i\in \Gamma}\mathcal{X}_{F_{x_i}}(x)\leq C.\]
Observe that if $x_j\in A$, there exists $i\in \Gamma$ such that $x_j\in \tilde{Q}_{x_i}$. Then $B_{x_j}\left(\frac{\xi\varepsilon}{2}\right)\subset (1+\varepsilon)\tilde{Q}_{x_i}$. In fact, this is straightforward if we assume $0<\xi<1$. Consequently, we have that
\[K\subset \bigcup_{j=1}^s B_{x_j}\left(\frac{\xi\varepsilon}{2}\right)\subset \bigcup_{i\in \Gamma}(1+\varepsilon)\tilde{Q}_{x_i},\]
which implies that
\[w(K)\leq \sum_{i\in \Gamma} w((1+\varepsilon)\tilde{Q}_{x_i})\leq 2\sum_{i\in \Gamma} w(\tilde{Q}_{x_i}).\]
Thus, by using \eqref{eq: teo: tipo debil (p,p) restringido para M^+ en R2 - eq3} and the $A_p^+(\mathcal{R})$ condition of $(w,v)$, we obtain
\begin{align*}
w(K)&\leq 2\sum_{i\in \Gamma}w(\tilde{Q}_{x_i})\\
&\leq \frac{C}{t^p}\sum_{i\in \Gamma} w(\tilde{Q}_{x_i})\left(\frac{|E\cap F_{x_i}|}{|Q_{x_i}|}\right)^p\\
&=\frac{C}{t^p}\sum_{i\in \Gamma}w(\tilde{Q}_{x_i})\left(\frac{|(\tilde{Q}_{x_i})^+|}{|Q_{x_i}|}\right)^p\left(\frac{|E\cap F_{x_i}|}{|(\tilde{Q}_{x_i})^+|}\right)^p\\
&\leq \frac{C}{t^p} [(w,v)]_{A_p^+(\mathcal{R})}^p \sum_{i\in \Gamma} v(E\cap F_{x_i})\\
&\leq \frac{C}{t^p} [(w,v)]_{A_p^+(\mathcal{R})}^pv\left(E\cap\left(\bigcup_{i\in \Gamma} F_{x_i}\right)\right)\\
&\leq \frac{C}{t^p} [(w,v)]_{A_p^+(\mathcal{R})}^pv(E).\qedhere
\end{align*}
	
\end{proof}
 
\medskip

\section{Acknowledgements}

I would like to specially thank Ph. D. Sheldy Ombrosi for suggesting me these one-sided problems, as well as reading the manuscript and giving me useful advices for redaction and bibliography.


\def\cprime{$'$}
\providecommand{\bysame}{\leavevmode\hbox to3em{\hrulefill}\thinspace}
\providecommand{\MR}{\relax\ifhmode\unskip\space\fi MR }
\providecommand{\MRhref}[2]{%
	\href{http://www.ams.org/mathscinet-getitem?mr=#1}{#2}
}
\providecommand{\href}[2]{#2}

\end{document}